\newtheorem{theorem}{Theorem}[section]
\newtheorem*{theoremA}{Theorem A}
\newtheorem*{theoremB}{Theorem B}
\newtheorem*{theoremC}{Theorem C}
\newtheorem{proposition}[theorem]{Proposition}
\newtheorem{lemma}[theorem]{Lemma}
\newtheorem{example}[theorem]{Example}
\newtheorem{remark}[theorem]{Remark}
\title{Invariant theory for non-reductive actions: extensions of Hilbert and Schwarz theorems} 
\author{Leandro Nery\\
	{\small Department of Mathematics, CCET - Federal University of S\~ao Carlos}\\
	{\small C.P. 676, 13565-905 S\~ao Carlos SP, Brazil \footnote{Email address: leandro.oliveira@ufscar.br}} }
\date{}
\begin{document}

\maketitle

\begin{abstract}
Classical invariant theory establishes a systematic correspondence between algebraic and smooth invariants for compact and reductive Lie groups. However, the extension of these results to non-compact and non-reductive regimes remains a subject of ongoing research. This paper examines the divergence between the algebras of polynomial and smooth invariants in two specific settings: discrete subgroups of the Lorentz group $O(n,1)$ acting on $\mathbb{R}^{n,1}$, and cocompact actions on smooth manifolds. We prove that for discrete Lorentz groups, the ring of polynomial invariants is finitely generated, but the smooth invariants are not generated by the polynomial ones. In the case of cocompact actions, we demonstrate that the polynomial invariant ring reduces to constants, while the algebra of smooth invariants is finitely generated and determined by the smooth structure of the quotient manifold. These results lead to a classification of invariant-theoretic regimes into four categories, identifying the boundaries of the Hilbert--Weyl and Schwarz theorems and establishing the role of properness in the alignment of algebraic and analytic descriptions of symmetry.
\end{abstract}

\bigskip

%% Keywords
%% keywords here, in the form: keyword \sep keyword
\noindent {\bf Keywords.} Invariant theory, Non-reductive groups, Lorentz group, Polynomial invariants, Smooth invariants, Hilbert basis, Schwarz theorem, Cocompact actions.
%% PACS codes here, in the form: \PACS code \sep code

\bigskip

%% MSC codes here, in the form: \MSC code \sep code
\noindent {\bf MSC Classification.} 13A50, 57S30, 22E40, 14L24, 53C50, 83A05.
\onehalfspace

\tableofcontents

\section{Introduction}

Classical invariant theory focuses primarily on the actions of compact and reductive Lie groups. A fundamental result in this setting is the Hilbert-Weyl theorem, which establishes that, for these classes of groups, the ring of polynomial invariants is finitely generated \cite{hilbert1890,hilbert1893,weyl1946}. This result provided a positive answer to the 14th problem of Hilbert for the reductive case. While Hilbert's original work addressed these groups, the question of whether finite generation holds for any group acting linearly on a polynomial ring remained open until Nagata provided a negative answer for a non-reductive group \cite{nagata1959}.\\

In the reductive case, the algebraic structure of polynomial invariants determines the associated analytic theory. Results by Schwarz and Luna established that any smooth invariant function can be expressed as a smooth function of the polynomial generators \cite{luna1976, schwarz1975}. This correspondence implies that the properties of smooth invariants are characterized by the polynomial ones.\\

However, beyond the reductive and compact settings, this relationship between algebra and analysis can diverge. This paper investigates two distinct non-compact, non-reductive regimes where the classical correspondence between the Hilbert-Weyl and Schwarz theorems is altered. We show that the setion of these two structures reveals a more complex landscape than the one observed in the reductive case.\\

First, we examine the case of discrete Lorentz groups acting on Minkowski space. We establish that, for these groups, the polynomial invariant ring remains finitely generated (Theorem A). This provides a class of non-reductive groups where a positive answer to Hilbert's 14th problem persists. Nevertheless, we demonstrate that the smooth theory diverges from the algebraic one: the classical Schwarz theorem fails, as the polynomial invariants no longer generate the algebra of smooth invariants (Proposition 4.1).\\

Second, we analyze cocompact actions of discrete groups on $\mathbb{R}^n$. In this regime, an opposite phenomenon occurs. The ring of polynomial invariants collapses to the constants (Theorem B), yielding a case where the 14th problem is satisfied in a degenerate sense. Despite this algebraic collapse, a structured smooth invariant theory remains, and we prove a smooth analogue of the Schwarz theorem where the algebra of smooth invariants is finitely generated by a set of basic smooth functions (Theorem C).\\

These results suggest a broader framework for invariant theory where the classical reductive case is seen as a specific, well-behaved regime. By investigating the Lorentz and cocompact settings, we observe that the failure of either the Hilbert-Weyl or the Schwarz theorem does not imply a lack of structure. Instead, it represents a shift in the nature of that structure, which may reside in the algebraic properties of the orbit space or in the geometric constraints of the action. This fourfold classification, further detailed in Section 6, provides a basis for investigating other non-reductive settings and delineates the conditions under which the analytic and algebraic aspects of invariant theory diverge.\\

The paper is organized as follows. Section 2 presents the necessary background on the Lorentz group and hyperbolic complex numbers. Section 3 establishes the finite generation of polynomial invariants for the discrete Lorentz case (Theorem A). Section 4 demonstrates the failure of the classical Schwarz theorem in this setting. Section 5 analyzes the cocompact regime, proving the collapse of polynomial invariants and the corresponding generation of smooth invariants (Theorems B and C). Finally, Section 6 provides a discussion of the resulting conceptual framework.

\subsection{Statement of main results}

We now formally state the main results of the paper.

\subsubsection{The Lorentz regime: polynomial generation and the failure of the Schwarz property}

This work investigates two distinct non-reductive and non-compact contexts to characterize the relationship between algebraic and smooth invariants. First, we examine discrete subgroups of the Lorentz group $O(n,1)$. Consider the hyperbolic rotation $\mathcal{H}_\beta \in O(1,1)$ given by
\begin{eqnarray} \label{matrizhb}
	\mathcal{H}_\beta = \begin{bmatrix}
		\cosh \beta & \sinh \beta \\
		\sinh \beta & \cosh \beta 
	\end{bmatrix}.
\end{eqnarray}

The following result was introduced in \cite{manoel2025affine} within the context of centrosymmetric matrices. We provide its proof in Section 3 as a basis for the higher-dimensional cases considered in this work.

\noindent{\bf Theorem 3.1}\cite{manoel2025affine}
	{\it Let $\Gamma$ be a discrete subgroup of $O(1,1)$ generated by a hyperbolic rotation $\mathcal{H}_\beta$, with fixed $\beta \in \mathbb{R} \setminus \{0\}$, acting on the Minkowski space $\mathbb{R}^{1,1}$. Then the ring of invariants $\mathcal{P}(\Gamma)$ is finitely generated and}
	$$\mathbb{R}[x,y]^\Gamma = \mathbb{R}[x^2 - y^2].$$

This result is extended to higher dimensions by considering the action of a discrete Lorentz group $O(n,1)$ on Minkowski space $\mathbb{R}^{n,1}$
 Consider the matrix
$$
\mathcal{L}_{\alpha,\beta} = \begin{bmatrix} R_{\alpha} & 0 \\ 0 & H_{\beta} \end{bmatrix},
$$
where $R_{\alpha} \in O(n-1)$ is a rotation (or reflection) in the Euclidean subspace $\mathbb{R}^{n-1}$, and $ \mathcal{H}_\beta \in O(1,1)$ is a hyperbolic matrix acting on the plane $(x_n, y)$, with fixed parameters $\alpha \in [0,2\pi)$ and $\beta \in \mathbb{R} \setminus \{0\}$.

\begin{theoremA}\label{theoremaB}
		Let $\Gamma$ be a discrete subgroup of $O(n,1)$ generated by a matrix $A$ conjugate in $O(n,1)$ to a block matrix of the form
	$$
	\mathcal{L}_{\alpha,\beta} = 
	\begin{bmatrix}
		R_\alpha & 0 \\
		0 & \mathcal{H}_\beta
	\end{bmatrix},
	$$
	with $R_\alpha \in O(n-1)$ and fixed $\beta \in \mathbb{R}\setminus\{0\}$. Then the ring of invariant polynomials
	$
	\mathcal{P}_{\mathbb{R}^{n,1}}(\Gamma)
	$
	admits a Hilbert basis given by the union of a Hilbert basis of $\mathcal{P}_{\mathbb{R}^{n-1}}(\langle R_\alpha \rangle)$ with the set $\{x^2 - y^2\}$.
\end{theoremA}

In contrast to the reductive setting, these finitely generated polynomial invariants do not determine the smooth invariant theory.

\noindent{\bf Proposition 4.1}(Failure of the Schwarz Theorem for $O(1,1)$)
	\label{prop:lorentz_schwarz_fails}
	 {\it Let $\Gamma = \langle \mathcal{H}_\beta \rangle$ be the group from Theorem~\ref{teorema31}, and let $\rho(x,y) = x^2 - y^2$ be the unique non-trivial polynomial invariant. Then the inclusion $\rho^*\mathcal{E}(\mathbb{R}) \subset \mathcal{E}(\mathbb{R}^2)^\Gamma$ is proper, i.e.,
	\[
	\rho^*\mathcal{E}(\mathbb{R}) \subsetneq \mathcal{E}(\mathbb{R}^2)^\Gamma.
	\]
}

\subsubsection{The cocompact regime: polynomial triviality and smooth generation}

In a second, complementary direction, we analyze cocompact actions of discrete groups on $\mathbb{R}^n$. In this setting, the relationship between algebraic and smooth invariants is reversed compared to the Lorentz case. The ring of polynomial invariants reduces to the constants, which represents a case where the classical Hilbert-Weyl theorem provides no non-trivial information. However, a theory of smooth invariants remains, as we establish through the following results:

\begin{theoremB}
	Let \( \Gamma \) be a discrete group acting freely, properly, and cocompactly on \( \mathbb{R}^n \). Then the algebra of polynomial invariants is trivial, i.e., \( P(\mathbb{R}^n)^\Gamma = \mathbb{R} \).
\end{theoremB}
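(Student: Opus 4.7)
The plan is to reduce the statement to the elementary fact that a bounded polynomial on $\mathbb{R}^n$ must be constant. The cocompactness hypothesis does all the work: a $\Gamma$-invariant polynomial takes all of its values on a compact fundamental domain, so it is bounded everywhere on $\mathbb{R}^n$, and a bounded polynomial is constant.

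Concretely, I would proceed in three steps. First, unpack cocompactness of the action to produce a compact subset $K \subset \mathbb{R}^n$ with $\Gamma \cdot K = \mathbb{R}^n$. Under the additional hypotheses of freeness and properness, $\mathbb{R}^n/\Gamma$ is a compact smooth manifold and $K$ may be chosen as the closure of a fundamental domain; however, only this topological statement is needed for the argument. Second, fix $p \in P(\mathbb{R}^n)^{\Gamma}$ and note that for any $x \in \mathbb{R}^n$ one can write $x = \gamma \cdot k$ with $\gamma \in \Gamma$ and $k \in K$, so $\Gamma$-invariance yields $p(x) = p(k)$. Therefore $p(\mathbb{R}^n) = p(K)$, and since $p$ is continuous and $K$ is compact, $p(K)$ is a bounded subset of $\mathbb{R}$, whence $p$ itself is a bounded function on $\mathbb{R}^n$. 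Third, invoke the classical fact that a bounded polynomial on $\mathbb{R}^n$ is constant: if $p$ has positive degree $d$, decompose $p = p_d + \cdots + p_0$ into homogeneous pieces, pick $v$ with $p_d(v) \neq 0$, and observe that $p(tv) = t^d p_d(v) + O(t^{d-1}) \to \pm\infty$ as $|t| \to \infty$, contradicting boundedness. Hence $\deg p = 0$ and $p$ is a real constant.

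The proof presents no real obstacle; the content is conceptual rather than technical. The one point worth emphasizing is that freeness and properness are not actually used in deriving the triviality of $P(\mathbb{R}^n)^\Gamma$ — cocompactness alone suffices — and these hypotheses are included in the statement only because they underlie the companion smooth result (Theorem C), where the manifold structure of the quotient becomes essential. Conceptually, the theorem should be read as a Liouville-type principle: polynomial growth on $\mathbb{R}^n$ is incompatible with descending to a compact quotient, which forces the ring of polynomial invariants to collapse to the constants and sets the stage for the nontrivial smooth-invariant theory developed in Theorem C.
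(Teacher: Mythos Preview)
Your proof is correct and follows essentially the same strategy as the paper: cocompactness forces any $\Gamma$-invariant polynomial to be bounded, and bounded polynomials on $\mathbb{R}^n$ are constant. The paper phrases the boundedness step via descent to the compact quotient $M=\mathbb{R}^n/\Gamma$ rather than via a compact set $K$ with $\Gamma\cdot K=\mathbb{R}^n$, but this is a cosmetic difference; your observation that freeness and properness are not actually used here is accurate and worth keeping in mind, even though the paper's post-proof remark suggests otherwise.
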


Despite the absence of non-constant polynomial invariants, the smooth invariant theory remains finitely generated. This is established by Theorem C, which provides an analogue of the Schwarz theorem for the cocompact regime:

\begin{theoremC}[Smooth Invariants for Cocompact Actions]
	Let \( \Gamma \) be a discrete group acting freely, properly, and cocompactly on a smooth manifold \( M \). Let \( \pi: M \to M/\Gamma \) be the projection map.
	Then:
	\begin{enumerate}
		\item
		\[
		\pi^*: \mathcal{E}(M/\Gamma) \xrightarrow{\cong} \mathcal{E}(M)^\Gamma
		\]
		is a topological isomorphism of Fréchet spaces.
		\item There exist smooth invariants \( \sigma_1, \ldots, \sigma_m \in \mathcal{E}(M)^\Gamma \) such that
		\[
		\sigma^* \mathcal{E}(\mathbb{R}^m) = \mathcal{E}(M)^\Gamma,
		\]
		where \( \sigma = (\sigma_1, \ldots, \sigma_m): M \to \mathbb{R}^m \).
		\item For any manifold \( X \) with trivial \( \Gamma \)-action,
		\[
		(\mathrm{id} \times \sigma)^* \mathcal{E}(X \times \mathbb{R}^m) = \mathcal{E}(X \times M)^\Gamma.
		\]
	\end{enumerate}
	
\end{theoremC}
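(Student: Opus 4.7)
The plan rests on the observation that a free, proper, cocompact action of a discrete group $\Gamma$ on $M$ makes the quotient map $\pi: M \to M/\Gamma$ a smooth covering onto a compact smooth manifold. The strategy is to identify $\mathcal{E}(M)^\Gamma$ with $\mathcal{E}(M/\Gamma)$ via $\pi^*$, then to realize $\mathcal{E}(M/\Gamma)$ as a quotient of $\mathcal{E}(\mathbb{R}^m)$ by pulling back along a Whitney embedding $\iota: M/\Gamma \hookrightarrow \mathbb{R}^m$.

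For part (1), bijectivity of $\pi^*$ is a direct consequence of $\pi$ being a local diffeomorphism: a $\Gamma$-invariant $f \in \mathcal{E}(M)^\Gamma$ descends unambiguously to $\bar f$ on $M/\Gamma$, whose smoothness is verified locally by composing with sections of $\pi$ on evenly covered opens. Continuity of $\pi^*$ in the Fréchet $C^\infty$ topology is immediate, and the cleanest argument for the continuous inverse is to invoke the open mapping theorem for Fréchet spaces, noting that $\mathcal{E}(M)^\Gamma$ is closed in $\mathcal{E}(M)$ and hence Fréchet.

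For part (2), compactness of $M/\Gamma$ allows the Whitney embedding theorem to furnish a smooth closed embedding $\iota: M/\Gamma \hookrightarrow \mathbb{R}^m$. Define $\sigma := \iota \circ \pi = (\sigma_1, \ldots, \sigma_m)$, whose components lie in $\mathcal{E}(M)^\Gamma$. Using a tubular neighborhood $U$ of $\iota(M/\Gamma)$ with smooth retraction $r: U \to \iota(M/\Gamma)$ and a bump function $\chi \in \mathcal{E}(\mathbb{R}^m)$ with $\chi \equiv 1$ near $\iota(M/\Gamma)$ and $\mathrm{supp}(\chi) \subset U$, the formula $E(h)(y) := \chi(y)\, h(\iota^{-1}(r(y)))$ defines a continuous linear extension operator $E: \mathcal{E}(M/\Gamma) \to \mathcal{E}(\mathbb{R}^m)$ satisfying $\iota^* \circ E = \mathrm{id}$. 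Hence $\iota^* \mathcal{E}(\mathbb{R}^m) = \mathcal{E}(M/\Gamma)$, and composing with the isomorphism of part (1) yields $\sigma^* \mathcal{E}(\mathbb{R}^m) = \mathcal{E}(M)^\Gamma$.

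For part (3), the $\Gamma$-action on $X \times M$ (trivial on $X$) is still free and proper, with quotient $X \times (M/\Gamma)$; although cocompactness may fail here, the argument of part (1) used only free and proper, so $(\mathrm{id}_X \times \pi)^*: \mathcal{E}(X \times M/\Gamma) \xrightarrow{\cong} \mathcal{E}(X \times M)^\Gamma$ still holds. The extension operator from part (2) has a natural parameter-dependent analogue $\tilde E(g)(x,y) := \chi(y)\, g(x, \iota^{-1}(r(y)))$ from $\mathcal{E}(X \times M/\Gamma)$ to $\mathcal{E}(X \times \mathbb{R}^m)$, satisfying $(\mathrm{id}_X \times \iota)^* \circ \tilde E = \mathrm{id}$. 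Combining these identifications yields the claim. The main technical point throughout is the construction of the extension operator $E$; cocompactness of the original $\Gamma$-action is essential, as it produces the compact closed submanifold $\iota(M/\Gamma) \subset \mathbb{R}^m$ admitting a global tubular neighborhood — without it, a more delicate localized Whitney-type construction would be required.
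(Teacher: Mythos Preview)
Your proposal is correct and follows essentially the same route as the paper: quotient-manifold theorem plus local sections and the open mapping theorem for part (1), Whitney embedding combined with a tubular neighborhood and bump-function extension for part (2), and the fiberwise/parameter-dependent version of both for part (3). Your explicit remark that part (3) only requires freeness and properness (not cocompactness) of the action on $X\times M$, and your packaging of the tubular-neighborhood extension as a continuous linear operator $E$, are clean touches that the paper leaves implicit, but the underlying argument is the same.
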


These results suggest that the relationship between algebraic and smooth invariants is more diverse than the one observed in the reductive case. The findings presented here lead to a fourfold classification based on the interplay of algebra, analysis, and geometry, delineating the boundaries of the classical Hilbert–Weyl and Schwarz theorems. This framework, discussed in detail in Section 6, provides a basis for investigating invariant theory in other non-reductive settings.

\section{Preliminaries: The Lorentz group and its discrete subgroups}

\subsection*{Minkowski space and the Lorentz group}

The Minkowski space of signature \((n,1)\), denoted by \(\mathbb{R}^{n,1}\), is identified with \(\mathbb{R}^{n+1}\) endowed with the quadratic form
\[
Q(x_1, \dots, x_n, y) = x_1^2 + \cdots + x_n^2 - y^2.
\]
The Lorentz group \(O(n,1)\) is defined as the group of invertible linear transformations of \(\mathbb{R}^{n+1}\) that preserve \(Q\), that is,
\[
O(n,1) := \left\{A \in \operatorname{GL}(n+1,\mathbb{R}) \mid Q(Ax) = Q(x) \text{ for all } x \in \mathbb{R}^{n+1} \right\}.
\]

The geometric structure preserved by \(O(n,1)\) and the properties of its discrete subgroups are central to our analysis. For a detailed treatment of the Lorentz group and the geometry of its actions in the context of hyperbolic space, we refer to \cite{ratcliffe2019}.

It is well known that any matrix in $O(1,1)$ can be written in the form
$$
\begin{bmatrix}
	\epsilon & 0 \\
	0 & \varepsilon
\end{bmatrix}
\begin{bmatrix}
	\cosh \beta & \sinh \beta \\
	\sinh \beta & \cosh \beta
\end{bmatrix}
=
\begin{bmatrix}
	\epsilon & 0 \\
	0 & \varepsilon
\end{bmatrix} \mathcal{H}_\beta,
$$
where $\epsilon, \varepsilon \in \{\pm1\}$, $\beta \in \mathbb{R}$, and $\mathcal{H}_\beta$ denotes the standard hyperbolic rotation (see~\eqref{matrizhb}). Similarly, every matrix $A \in O(n,1)$ admits a factorization of the form
$$
A =
\begin{bmatrix}
	S & 0 \\
	0 & \epsilon
\end{bmatrix}
\begin{bmatrix}
	I_{n-1} & 0 \\
	0 & \mathcal{H}_\beta
\end{bmatrix}
\begin{bmatrix}
	R & 0 \\
	0 & 1
\end{bmatrix},
$$
where $S \in O(n)$, $R \in SO(n)$, $\epsilon \in \{\pm1\}$, and $\beta \in \mathbb{R}$.

The Lorentz group $O(n,1)$ has four connected components. Let us define the matrices
$$
\Lambda_p = \begin{bmatrix}
	I_{n-1,1} & 0 \\
	0 & 1
\end{bmatrix}, \quad
\Lambda_t = J, \quad
\Lambda_{pt} = \Lambda_p \Lambda_t,
$$
where $I_{n-1,1}$ is the diagonal matrix with $n-1$ entries equal to $1$ and one equal to $-1$. Denoting by $SO_0(n,1)$ the identity component of $O(n,1)$, we have the disjoint decomposition
$$
O(n,1) = SO_0(n,1) \,\dot\cup\, \Lambda_p SO_0(n,1) \,\dot\cup\, \Lambda_t SO_0(n,1) \,\dot\cup\, \Lambda_{pt} SO_0(n,1).
$$

\subsection*{Hyperbolic complex numbers and the Minkowski plane}

The Minkowski plane $\mathbb{R}^{1,1}$ admits a algebraic model via the set
$$
\mathbb{D} := \{x + h y \mid x, y \in \mathbb{R},\ h^2 = 1,\ h \notin \mathbb{R}\},
$$
called the algebra of hyperbolic complex numbers (also called Lorentz or split-complex numbers). Although the algebraic structure of $\mathbb{D}$ is analogous to the complex plane, it is not an integral domain; indeed, $(1 + h)(1 - h) = 0$. 
Nevertheless, standard operations from complex analysis, such as conjugation, modulus, and argument, are well-defined in this setting.

The algebra $\mathbb{D}$ is used to characterize Lorentz transformations, light cone geometry, and causal structure in dimension two. Further geometric interpretations can be found in~\cite{gurses2015one, harkin2004geometry, yaglom2014complex}.

\subsection*{Discrete subgroups of the Lorentz group}

We focus on discrete subgroups $\Gamma \subset O(n,1)$ generated by matrices of the form
$$
\mathcal{L}_{\alpha,\beta} =
\begin{bmatrix}
	R_\alpha & 0 \\
	0 & \mathcal{H}_\beta
\end{bmatrix},
$$
where $R_\alpha \in O(n-1)$ is a Euclidean rotation (or reflection) in $\mathbb{R}^{n-1}$, and $\mathcal{H}_\beta \in O(1,1)$ is a hyperbolic boost matrix defined as in~\eqref{matrizhb}, with fixed $\beta \in \mathbb{R} \setminus \{0\}$.

\begin{remark}
	The subgroup $\Gamma = \langle \mathcal{H}_\beta \rangle \subset O(1,1)$ is an infinite discrete group, isomorphic to $\mathbb{Z}$, and is not reductive. The absence of $\Gamma$-invariant positive-definite inner products on $\mathbb{R}^2$, due to the hyperbolic nature of the generator ($\beta \neq 0$), implies that the action is not reductive (see \cite[p.~125]{Humphreys1975}).
\end{remark}

The discrete subgroups $\Gamma \subset O(n,1)$ considered here are constructed via compositions of Euclidean isometries and hyperbolic transformations. In the following sections, we analyze the invariant rings for these actions, examining the conditions for finite generation and the validity of the Schwarz theorem.

\section{Discrete Lorentz group in Minkowski space}

This section addresses the structure of invariant rings under the action of discrete subgroups of $O(n,1)$. We first examine the case of $O(1,1)$, where the geometry of the Minkowski plane allows for a direct description of polynomial invariants. These results are then extended to higher-dimensional Minkowski spaces.

\subsection{$\Gamma$ as a discrete subgroup of $O(1,1)$}

Consider the group $\Gamma = \langle \mathcal{H}_\beta \rangle$ acting on the Minkowski plane $\mathbb{R}^{1,1}$, where $\mathcal{H}_\beta$ is the hyperbolic rotation in (\ref{matrizhb}), with fixed
$ \beta \in \mathbb{R} \setminus \{0\}.$

\begin{theorem}\label{teorema31}
	Let $\Gamma$ be a discrete subgroup of $O(1,1)$ generated by a hyperbolic rotation $\mathcal{H}_\beta$ with fixed $\beta \in \mathbb{R} \setminus \{0\}$. Then the invariant ring $\mathcal{P}(\Gamma)$ is finitely generated and
	$$
	\mathbb{R}[x,y]^\Gamma = \mathbb{R}[x^2 - y^2].
	$$
\end{theorem}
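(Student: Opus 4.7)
The plan is to diagonalize the action. The hyperbolic rotation $\mathcal{H}_\beta$ has eigenvalues $e^{\beta}$ and $e^{-\beta}$, with eigenvectors lying along the light-cone directions. Concretely, I would introduce the null coordinates $u = x + y$ and $v = x - y$; these are linearly independent and form a basis of $\mathbb{R}[x,y]$, so $\mathbb{R}[x,y] = \mathbb{R}[u,v]$. A direct calculation from the definition of $\mathcal{H}_\beta$ in~\eqref{matrizhb} gives
\[
\mathcal{H}_\beta \cdot u = e^{\beta} u, \qquad \mathcal{H}_\beta \cdot v = e^{-\beta} v,
\]
which turns the action of the generator $\mathcal{H}_\beta$ into a simple scaling on each coordinate.

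Next, I would expand an arbitrary polynomial $p \in \mathbb{R}[u,v]$ in monomials, $p = \sum_{m,n \ge 0} b_{mn} u^m v^n$, and read off the invariance condition. Since the generator acts monomially, $\mathcal{H}_\beta \cdot p = \sum_{m,n} b_{mn} e^{(m-n)\beta} u^m v^n$, and equating coefficients with $p$ yields
\[
b_{mn}\bigl(e^{(m-n)\beta} - 1\bigr) = 0 \quad \text{for all } m,n \ge 0.
\]
Because $\beta \in \mathbb{R}\setminus\{0\}$, the factor $e^{(m-n)\beta} - 1$ vanishes only when $m = n$, forcing $b_{mn} = 0$ whenever $m \neq n$. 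Hence every invariant polynomial has the form $p = \sum_{m \ge 0} b_{mm}(uv)^m$, a polynomial in the single monomial $uv = (x+y)(x-y) = x^2 - y^2$.

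The remaining direction is immediate: $x^2 - y^2 = Q(x,y)$ is preserved by $O(1,1)$ and in particular by $\Gamma$, so $\mathbb{R}[x^2 - y^2] \subseteq \mathbb{R}[x,y]^\Gamma$. Combined with the previous paragraph this gives the equality $\mathbb{R}[x,y]^\Gamma = \mathbb{R}[x^2 - y^2]$, which is visibly finitely generated (by a single element). There is no real obstacle here; the only subtlety worth noting is that the argument depends crucially on $\beta \neq 0$, since otherwise $\Gamma$ is trivial and every polynomial is invariant. The diagonalization trick via light-cone coordinates is what replaces the averaging/Reynolds operator that would be used in the compact setting, and it works precisely because the two eigenvalues $e^{\pm\beta}$ of $\mathcal{H}_\beta$ are real and multiplicatively independent from $1$.
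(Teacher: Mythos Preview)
Your proof is correct and follows essentially the same strategy as the paper's: diagonalize the action of $\mathcal{H}_\beta$, expand an arbitrary polynomial in the eigenbasis, and compare monomial coefficients to force $m=n$. The only difference is packaging---the paper carries this out in the hyperbolic complex number formalism $z=x+hy$, $\bar z=x-hy$ with $h^2=1$ (so $z\bar z=x^2-y^2$ and $\mathcal{H}_\beta z=e^{h\beta}z$), whereas you work directly in the real light-cone coordinates $u=x+y$, $v=x-y$; since $\mathbb{D}\cong\mathbb{R}\times\mathbb{R}$ via exactly this change of basis, the two computations are the same diagonalization in different notation, with your version being the more elementary of the two.
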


\begin{proof}
	Consider the identification of the Minkowski plane with the algebra of hyperbolic numbers $\mathbb{D}$. Let $z = x+hy$ and its conjugate $\overline{z} = x-hy$. The action of the generator on $z$ is given by $\mathcal{H}_\beta z = e^{h\beta} z$. Any polynomial $f \in \mathbb{R}[x,y]$ can be uniquely expressed as a polynomial in the variables $z$ and $\overline{z}$ over $\mathbb{R}$,
	$$ f(z, \overline{z}) = \sum_{r,s} a_{rs} z^r \overline{z}^s. $$
	The condition for $\Gamma$-invariance, $f(e^{h\beta} z, e^{-h\beta} \overline{z}) = f(z, \overline{z})$, implies the identity
	$$ \sum_{r,s} a_{rs} e^{h\beta(r-s)} z^r \overline{z}^s = \sum_{r,s} a_{rs} z^r \overline{z}^s. $$
	Since the monomials $z^r \overline{z}^s$ are linearly independent over $\mathbb{R}$, we must have $a_{rs} (e^{h\beta(r-s)} - 1) = 0$ for all $r, s$. Given that $\beta \neq 0$ and $h^2=1$, the term $e^{h\beta(r-s)} - 1$ vanishes if and only if $r = s$. Consequently, $f$ is a polynomial in the product $z\overline{z} = x^2 - y^2$, which establishes that $\mathbb{R}[x,y]^\Gamma = \mathbb{R}[x^2 - y^2]$.
\end{proof}

\begin{remark}
	The subgroups $\langle \mathcal{H}_\beta \rangle$ and $\langle -\mathcal{H}_\beta \rangle$ are distinct in $O(1,1)$ but act identically on the ring of invariants. Since $-\mathcal{H}_\beta = \Lambda_{pt} \mathcal{H}_\beta$ and the element $\Lambda_{pt}$ acts as $-I$, it preserves any polynomial composed of even-degree monomials. Specifically, both subgroups preserve the quadratic form $Q(x,y) = x^2 - y^2$ and, consequently, share the same invariant ring $\mathbb{R}[x^2 - y^2]$. Therefore, Theorem~\ref{teorema31} also applies to subgroups generated by matrices in the component $\Lambda_{pt} SO_0(1,1)$.
\end{remark}

To complete the classification of discrete cyclic subgroups in $O(1,1)$, we consider the components $\Lambda_p SO_0(1,1)$ and $\Lambda_t SO_0(1,1)$. The matrices in these components take the form
$$\begin{bmatrix}
	\cosh \beta & \sinh \beta \\
	-\sinh \beta & -\cosh \beta
\end{bmatrix} \in \Lambda_p SO_0(1,1) \quad \text{and} \quad
\begin{bmatrix}
	-\cosh \beta & -\sinh \beta \\
	\sinh \beta & \cosh \beta
\end{bmatrix} \in \Lambda_t SO_0(1,1),$$
respectively. These matrices are involutive and generate finite subgroups isomorphic to $\mathbb{Z}_2$. Since these groups are finite, the action is reductive and the classical Hilbert-Weyl theorem applies.

\begin{proposition}
	Any such involutive element $A$ is conjugate to either the reflection $\kappa_x = \operatorname{diag}(1, -1)$ or $\kappa_y = \operatorname{diag}(-1, 1)$. 
\end{proposition}
The conjugation $M A M^{-1}$ is realized by the matrix$$M = \begin{bmatrix} \cosh \tfrac{\beta}{2} & -\sinh \tfrac{\beta}{2} \\ -\sinh \tfrac{\beta}{2} & \cosh \tfrac{\beta}{2} \end{bmatrix}.$$Consequently, the invariant rings for these cases are $\mathbb{R}[x, y^2]$ or $\mathbb{R}[y, x^2]$, following the standard theory of finite reflection groups.

\subsection{Discrete subgroups of $O(n,1)$}

The extension of the invariant theory from $O(1,1)$ to higher-dimensional Lorentz groups relies on the block-diagonal structure of the generators. We consider subgroups $\Gamma \subset O(n,1)$ generated by matrices that decouple the action into an orthogonal transformation on $\mathbb{R}^{n-1}$ and a hyperbolic boost on $\mathbb{R}^{1,1}$. This decomposition allows the use of the following result on product actions:

\begin{lemma}[\cite{baptistelli2016}, Lemma 3.3]\label{lemma3_3}
	Let $\Gamma_1$ and $\Gamma_2$ be groups acting linearly on vector spaces $X$ and $Y$, respectively. If $\{u_1, \dots, u_r\}$ is a Hilbert basis for the invariant ring $\mathcal{P}_X(\Gamma_1)$ and $\{v_1, \dots, v_s\}$ is a Hilbert basis for $\mathcal{P}_Y(\Gamma_2)$, then the union
	$$\{u_1, \dots, u_r, v_1, \dots, v_s\}$$
	is a Hilbert basis for the invariant ring of the product action, $\mathcal{P}_{X \times Y}(\Gamma_1 \times \Gamma_2)$.
\end{lemma}

As an application of Lemma \ref{lemma3_3}, we obtain the following characterization of the invariant rings for discrete subgroups of $O(n,1)$.

\begin{theoremA}
	Let $\Gamma$ be a discrete subgroup of $O(n,1)$ generated by a matrix $A \in O(n,1)$. 
	If $A$ is conjugate in $O(n,1)$ to a block matrix of the form
	$$
	\mathcal{L}_{\alpha,\beta} = 
	\begin{bmatrix}
		R_\alpha & 0 \\
		0 & \mathcal{H}_\beta
	\end{bmatrix},
	$$
	with $R_\alpha \in O(n-1)$ and fixed $\beta \in \mathbb{R}\setminus\{0\}$, then the ring of invariant polynomials
	$
	\mathcal{P}_{\mathbb{R}^{n,1}}(\Gamma)
	$
	admits a Hilbert basis given by the union of a Hilbert basis of $\mathcal{P}_{\mathbb{R}^{n-1}}(\langle R_\alpha \rangle)$ with the set $\{x^2 - y^2\}$.
\end{theoremA}
\begin{proof}
	By hypothesis, the action of $\Gamma$ is conjugate to the product action of $\Gamma_1 = \langle R_\alpha \rangle$ on $\mathbb{R}^{n-1}$ and $\Gamma_2 = \langle \mathcal{H}_\beta \rangle$ on $\mathbb{R}^{1,1}$. Since $\Gamma$ is generated by the block-diagonal matrix $\mathcal{L}_{\alpha,\beta}$, any polynomial $f \in \mathcal{P}(\mathbb{R}^{n,1})^\Gamma$ satisfies the product invariance condition. By Theorem \ref{teorema31}, the Hilbert basis for $\mathcal{P}(\mathbb{R}^{1,1})^{\Gamma_2}$ is the single generator $\rho(x,y) = x^2 - y^2$. Applying Lemma \ref{lemma3_3}, the Hilbert basis for the total space $\mathbb{R}^{n-1} \times \mathbb{R}^{1,1}$ is obtained by the union of the Hilbert bases of the respective factors. The result follows from the fact that $R_\alpha$ is a compact operator, ensuring that $\mathcal{P}(\mathbb{R}^{n-1})^{\Gamma_1}$ is finitely generated by the classical Hilbert-Weyl theorem.
\end{proof}

\medskip

The following example illustrates the application of Theorem A to a non-compact subgroup where a finite rotation interacts with a hyperbolic boost.

\begin{example}
	Let $\Gamma \subset O(3,1)$ be the infinite cyclic subgroup generated by the block matrix$$A = \begin{bmatrix}
		R_{2\pi/3} & 0 \\
		0 & \mathcal{H}_\beta
	\end{bmatrix} =
	\begin{bmatrix}
		\cos(\tfrac{2\pi}{3}) & -\sin(\tfrac{2\pi}{3}) & 0 & 0 \\
		\sin(\tfrac{2\pi}{3}) & \cos(\tfrac{2\pi}{3}) & 0 & 0 \\
		0 & 0 & \cosh\beta & \sinh\beta \\
		0 & 0 & \sinh\beta & \cosh\beta
	\end{bmatrix},$$with $\beta \neq 0$. The action of the upper block corresponds to the cyclic subgroup $C_3 \subset SO(2)$, whose invariant ring is generated by $\rho_1 = x^2+y^2$ and the cubic harmonics $\rho_2 = x^3 - 3xy^2$ and $\rho_3 = 3x^2y - y^3$. By Theorem A, a Hilbert basis for the $\Gamma$-action on $\mathbb{R}^{3,1}$ is given by the union of these generators with the Minkowski quadratic form of the hyperbolic block:$$\mathcal{B} = \{\, x^2+y^2,\; x^3 - 3xy^2,\; 3x^2y - y^3,\; z^2-w^2 \,\}.$$
\end{example}

\medskip

This construction identifies a specific class of non-compact discrete subgroups of $O(n,1)$ that admit finite Hilbert bases, extending the classical theory to a non-reductive Lorentzian setting through the decomposition provided by Theorem A.

\begin{remark}
	While Theorem A covers elements conjugate to a block-diagonal form, $O(n,1)$ also contains parabolic elements. These elements are characterized by a unipotent structure and, despite generating discrete subgroups, do not admit the product-block decomposition required by Lemma~\ref{lemma3_3}. In such cases, the determination of a Hilbert basis relies on the invariant theory of unipotent groups, and the corresponding smooth-algebraic contrast for these actions remains a subject for future investigation.
\end{remark}

\section{Smooth invariants and the role of compactness}

The Schwarz Theorem is a fundamental result in the invariant theory of compact Lie groups. Let $\mathcal{E}(\mathbb{R}^n)$ denote the algebra of real-valued smooth functions on $\mathbb{R}^n$ endowed with the topology of uniform convergence of all derivatives on compact sets. 

The theorem states that for an orthogonal action of a compact group $\Gamma$ on $\mathbb{R}^n$, the algebra of smooth invariants $\mathcal{E}(\mathbb{R}^n)^\Gamma$ is generated by polynomial invariants. Specifically, if $\{\rho_1, \ldots, \rho_k\}$ is a Hilbert basis for the polynomial ring $\mathcal{P}[\mathbb{R}^n]^\Gamma$, then the pullback map $\rho^*$ induces an isomorphism of algebras:
$$\rho^* : \mathcal{E}(\mathbb{R}^k) \xrightarrow{\cong} \mathcal{E}(\mathbb{R}^n)^\Gamma,$$
where $\rho^*(F) = F(\rho_1, \ldots, \rho_k)$. This result provides a bridge between algebraic and differential invariant theory, with extensive applications in singularity theory and the study of equivariant maps.

This raises a fundamental question regarding the necessity of compactness in Schwarz's formulation. Specifically, one may ask whether a similar correspondence between smooth and algebraic invariants persists for non-compact groups that possess a rigid geometric structure, such as discrete subgroups of the Lorentz group.

Let $\Gamma = \langle \mathcal{H}_\beta \rangle \subset O(1,1)$ be the discrete subgroup generated by a hyperbolic rotation $\mathcal{H}_\beta$ acting on $\mathbb{R}^{1,1}$. Despite its discrete nature and clear geometric action, we demonstrate that the absence of compactness leads to three fundamental obstructions that prevent a direct adaptation of Schwarz's Theorem.

To analyze these obstructions, it is convenient to employ light-cone coordinates
\[
u = x + y, \quad v = x - y.
\]
In this coordinate system, the invariant quadratic form becomes $Q(u,v) = uv$, and the generator acts as $(u, v) \mapsto (u e^\beta, v e^{-\beta})$. Consequently, for any $t \neq 0$, the hyperbola $uv = t$ consists of two disconnected branches, each preserved by the action of $\Gamma$. As we shall demonstrate, this topological separation underlies the failure of the smooth extension property in the non-compact setting.

\bigskip

\subsection{First obstruction: Insufficiency of polynomial invariants}

The first obstruction to a smooth Schwarz-type theorem in the Lorentz setting is the existence of smooth invariants that cannot be expressed as functions of the fundamental polynomial invariants. This phenomenon arises directly from the disconnected nature of the orbits in $\mathbb{R}^{1,1}$

\begin{proposition} \label{prop:insufficiency}
	Let $\Gamma = \langle \mathcal{H}_\beta \rangle$ be the discrete subgroup of $O(1,1)$ generated by a hyperbolic rotation. Let $\rho(x,y) = x^2 - y^2$ be the generator of the invariant ring $\mathcal{P}(\mathbb{R}^{1,1})^\Gamma$. Then the pullback map $\rho^* : \mathcal{E}(\mathbb{R}) \to \mathcal{E}(\mathbb{R}^2)^\Gamma$ is not surjective; that is,
	\[
	\rho^*\mathcal{E}(\mathbb{R}) \subsetneq \mathcal{E}(\mathbb{R}^2)^\Gamma.
	\]
\end{proposition}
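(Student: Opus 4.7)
The plan is to exhibit a smooth $\Gamma$-invariant function on $\mathbb{R}^2$ that lies outside $\rho^*\mathcal{E}(\mathbb{R})$. The geometric driver, highlighted in the light-cone discussion preceding the statement, is that for $c > 0$ the level set $\{\rho = c\}$ has two connected components — the right branch $R^+ = \{u > 0,\, v > 0\}$ and the left branch $R^- = \{u < 0,\, v < 0\}$ — each preserved by the action $(u,v) \mapsto (u e^\beta, v e^{-\beta})$. Any function in $\rho^*\mathcal{E}(\mathbb{R})$ must therefore assign the same value to $(1,0) \in R^+$ and to $(-1,0) \in R^-$, so to defeat the inclusion it suffices to construct a smooth invariant that distinguishes these two branches.

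Concretely, let $g \colon \mathbb{R} \to \mathbb{R}$ be the standard flat function with $g(t) = e^{-1/t}$ for $t > 0$ and $g(t) = 0$ for $t \le 0$, and put
\begin{equation*}
f(x,y) = \begin{cases} g(\rho(x,y)), & (x,y) \in R^+, \\ 0, & \text{otherwise.} \end{cases}
\end{equation*}
Invariance of $f$ is immediate: $\rho$ is $\Gamma$-invariant and the action preserves each sign orthant in $(u,v)$-coordinates, hence preserves $R^+$. Evaluating at $(1,0)$ and $(-1,0)$ gives $f(1,0) = e^{-1}$ and $f(-1,0) = 0$ at two points sharing the same value of $\rho$, so once we know $f \in \mathcal{E}(\mathbb{R}^2)$, no $h \in \mathcal{E}(\mathbb{R})$ can satisfy $f = h \circ \rho$, and the strict inclusion follows.

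The main technical obstacle is therefore verifying the smoothness of $f$. Away from the origin I would proceed by a local case analysis: in the interior of each of the four open regions $R^+$, $R^-$, $\{y > |x|\}$, $\{y < -|x|\}$, the function $f$ is either $g(\rho)$ or $0$, hence smooth; on the right half of the light cone, $f$ coincides locally with the smooth composition $g \circ \rho$ because $g(\rho) = 0$ on the adjacent top and bottom sides; and on the left half of the light cone, $f$ is identically zero in a neighborhood. The delicate point is the origin, where all four regions meet simultaneously, and there I would invoke the flatness of $g$: on $R^+$ one has $\rho \le x^2 \le \|(x,y)\|^2$, hence $0 \le f(x,y) \le e^{-1/\|(x,y)\|^2}$, and the same exponential bound controls each mixed partial of $g \circ \rho$ on $R^+$ (such a partial being a polynomial in $x,y$ times a rational function of $\rho$ times $e^{-1/\rho}$, by Faà di Bruno). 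Combined with $f \equiv 0$ off $\overline{R^+}$, a standard induction then shows that every mixed partial of $f$ exists, vanishes at the origin, and is continuous in a neighborhood, completing the verification that $f \in \mathcal{E}(\mathbb{R}^2)^\Gamma$.
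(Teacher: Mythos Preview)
Your proposal is correct and follows essentially the same approach as the paper: you construct the identical counterexample (the paper's $G$ is your $f$, with $R^+=\{u>0,\,v>0\}=\{x^2-y^2>0,\,x>0\}$), and you organize the argument into the same three steps of smoothness, $\Gamma$-invariance, and failure to factor through $\rho$. Your treatment of smoothness at the origin via the bound $\rho\le\|(x,y)\|^2$ on $R^+$ is in fact more careful than the paper's, which handles only the light-cone boundary and does not isolate the origin.
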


\begin{proof}
	We construct a smooth $\Gamma$-invariant function $G$ that is not a function of the quadratic form $\rho$. Let $G: \mathbb{R}^2 \to \mathbb{R}$ be defined by
	\[
	G(x,y) =
	\begin{cases}
		\exp\left(-\frac{1}{x^2 - y^2}\right), & \text{if } x^2 - y^2 > 0 \text{ and } x > 0, \\0, & \text{otherwise}.\end{cases}
	\]
	To verify that $G \in \mathcal{E}(\mathbb{R}^2)$, note that $G$ is clearly smooth on the open regions $\{x^2 - y^2 > 0, x > 0\}$ and its complement. Along the light-cone $x^2 - y^2 = 0$, the function and all its partial derivatives vanish as $t = \rho(x,y) \to 0^+$ due to the rapid decay of the exponential term, ensuring $C^\infty$ regularity across the boundary.
	
	The $\Gamma$-invariance of $G$ is best seen in light-cone coordinates $u = x+y$ and $v = x-y$, where the action is $(u,v) \mapsto (ue^\beta, ve^{-\beta})$. The region $\rho(x,y) > 0$ comprises two disjoint components: the right branch ($u, v > 0$) and the left branch ($u, v < 0$). Since the action of $\mathcal{H}_\beta$ preserves the quadrants, each branch is an invariant manifold. On the right branch, $G(u,v) = \exp(-1/uv)$, which is invariant under the scaling $u \mapsto ue^\beta$ and $v \mapsto ve^{-\beta}$. Since $G$ vanishes identically on the left branch and in the region $uv \le 0$, it follows that $G \in \mathcal{E}(\mathbb{R}^2)^\Gamma$.
	
	Finally, suppose there exists $f \in \mathcal{E}(\mathbb{R})$ such that $G = f \circ \rho$. This would imply that $G$ is constant on the level sets of $\rho$. However, for any fixed $t > 0$, the level set $\rho^{-1}(t)$ contains points in both the right and left branches. On the right branch, $G$ takes the value $e^{-1/t} > 0$, while on the left branch, $G$ is identically zero. This contradiction shows that $G \notin \rho^*\mathcal{E}(\mathbb{R})$, completing the proof.
\end{proof}

\subsection{Second obstruction: Failure of orbit separation}
	In the compact setting, a necessary condition for the Schwarz Theorem is that the Hilbert map $\rho$ must separate the orbits of the group action. For the Lorentz group action, however, the polynomial invariants provide a non-Hausdorff separation of the orbit space $\mathbb{R}^{1,1}/\Gamma$.
	
	\begin{proposition} \label{prop:separation}
		Let $\Gamma = \langle \mathcal{H}_\beta \rangle$ be the discrete subgroup of $O(1,1)$ generated by a hyperbolic rotation. The map $\rho: \mathbb{R}^{1,1} \to \mathbb{R}$ does not separate $\Gamma$-orbits; specifically, the induced map on the orbit space $\bar{\rho} : \mathbb{R}^{1,1}/\Gamma \to \mathbb{R}$ is not injective.
	\end{proposition}
	\begin{proof}
		Consider the points $p = (2, 1)$ and $q = (-2, 1)$ in $\mathbb{R}^{1,1}$. Both satisfy $\rho(p) = \rho(q) = 3$, yet they belong to distinct $\Gamma$-orbits. In light-cone coordinates $(u, v)$, these points are given by $p = (3, 1)$ and $q = (-1, -3)$. Since the action $(u, v) \mapsto (u e^{n\beta}, v e^{-n\beta})$ preserves the quadrants of the $uv$-plane, the strictly positive coordinates of $p$ and the strictly negative coordinates of $q$ ensure that no $g \in \Gamma$ can map one point to the other. Thus, $\bar{\rho}(p) = \bar{\rho}(q)$ while $p \neq q$ in $\mathbb{R}^{1,1}/\Gamma$.
	\end{proof}
	
	This lack of injectivity implies that the orbit space is not homeomorphic to the image of $\rho$. In the compact case, the orbit space is uniquely characterized as a semi-algebraic set by the polynomial invariants; here, the polynomial ring $\mathcal{P}(\mathbb{R}^{1,1})^\Gamma$ lacks the resolution to distinguish between the disjoint components of the invariant manifolds.

\bigskip

\subsection{Third obstruction: Failure of the core analytical machinery}

The proof of Schwarz's Theorem for compact groups (\cite{schwarz1975}) relies on a analytical machinery that is unavailable in the non-compact setting. The following obstructions represent fundamental barriers to adapting the classical framework to discrete Lorentz groups:

\begin{itemize}
	\item \textbf{Absence of an averaging operator:} For a compact group $G$, the Haar integral provides a continuous projection $\operatorname{Av}: \mathcal{E}(\mathbb{R}^n) \to \mathcal{E}(\mathbb{R}^n)^G$. In the case of an infinite discrete group such as $\Gamma$, the lack of a finite invariant measure precludes the existence of such an operator, eliminating the standard method for constructing invariant smooth maps.
	\item \textbf{Non-properness of the Hilbert map:} Unlike the compact case, where the map $\rho$ is proper, the Lorentzian quadratic form fails this condition. 
\end{itemize}

\begin{proposition} \label{prop:nonproper}
	The Hilbert map $\rho: \mathbb{R}^{1,1} \to \mathbb{R}$ is not a proper map.
\end{proposition}
\begin{proof}
	The preimage $\rho^{-1}(K)$ is a closed region in $\mathbb{R}^{1,1}$ bounded by hyperbolas. Since these branches extend asymptotically toward the null lines $x = \pm y$, the set $\rho^{-1}(K)$ is unbounded and, consequently, non-compact.
\end{proof}

\begin{itemize}
	\item \textbf{Non-Hausdorff orbit space topology:} The failure of orbit separation (Proposition~\ref{prop:separation}) implies that the quotient space $\mathbb{R}^{1,1}/\Gamma$ is non-Hausdorff. In Schwarz’s original framework, the orbit space is required to be a well-behaved (stratified) object where smooth invariants can be uniquely identified. In the Lorentzian case, the existence of distinct orbits whose closures intersect in the manifold topology prevents the quotient from inheriting a standard smooth or even Hausdorff structure.
\end{itemize}

These obstructions collectively demonstrate that the correspondence between algebraic and smooth invariants is fundamentally linked to the compactness of the group. The transition to the Lorentzian setting introduces pathologies that preclude any direct application of the classical Schwarz framework.

\section{Cocompact actions and the Hilbert-Weyl theorem}

The Hilbert-Weyl Theorem establishes that for a compact Lie group $\Gamma$ acting linearly on $\mathbb{R}^n$, the ring of invariant polynomials $\mathcal{P}[\mathbb{R}^n]^\Gamma$ is finitely generated. Given the extension of certain aspects of invariant theory to non-compact Lorentz groups in Theorem A, it is natural to investigate whether the Hilbert–Weyl theorem persists for other classes of non-compact actions, specifically cocompact actions.

In this section, we demonstrate that the Hilbert-Weyl theorem fails for cocompact actions. This failure arises from an incompatibility between polynomial growth and the geometric dynamics of such actions. While the polynomial invariant theory is restricted to constants, the smooth invariant theory admits a complete characterization. This divergence highlights a significant departure from the compact case.

\subsection{The algebraic obstruction: Periodicity vs. polynomial growth}

Recall that a discrete group $\Gamma$ acting on $\mathbb{R}^n$ is said to act cocompactly if the quotient space $\mathbb{R}^n/\Gamma$ is compact. When the action is also free and proper, this quotient $M = \mathbb{R}^n/\Gamma$ inherits the structure of a compact smooth manifold; a prototypical example is the $n$-torus $\mathbb{T}^n$ arising from the translation action of $\mathbb{Z}^n$.

\begin{theoremB}
Let \( \Gamma \) be a discrete group acting freely, properly, and cocompactly on \( \mathbb{R}^n \). Then the algebra of polynomial invariants is trivial, i.e., \( P(\mathbb{R}^n)^\Gamma = \mathbb{R} \).
\end{theoremB}

\begin{proof}
	Suppose there exists a non-constant invariant polynomial $p \in \mathcal{P}(\mathbb{R}^n)^\Gamma$. Since $\Gamma$ acts cocompactly and properly, the quotient $M = \mathbb{R}^n/\Gamma$ is a compact Hausdorff space. By $\Gamma$-invariance, $p$ descends to a continuous function $\tilde{p}: M \to \mathbb{R}$ such that $p = \tilde{p} \circ \pi$, where $\pi: \mathbb{R}^n \to M$ is the quotient projection. Since $M$ is compact, $\tilde{p}$ is bounded. The surjectivity of $\pi$ then implies that $p$ is bounded on $\mathbb{R}^n$. However, a non-constant polynomial on $\mathbb{R}^n$ is necessarily unbounded. It follows that every $\Gamma$-invariant polynomial must be constant.
\end{proof}

The observation in Theorem B reveals a general obstruction: cocompactness forces polynomial invariants to degenerate to constants. This phenomenon highlights a divergence from the compact case, where the existence of a finite generating set is guaranteed.

\begin{remark}
	The requirement of properness ensures that the quotient space $\mathbb{R}^n/\Gamma$ is Hausdorff. Without this condition, an invariant polynomial $p$ might not descend to a well-defined continuous function on the quotient. Taken together, the conditions of freeness, properness, and cocompactness ensure that $\mathbb{R}^n/\Gamma$ is a compact smooth manifold and that the quotient projection $\pi$ is a covering map.
\end{remark}

The behavior of smooth invariants in the cocompact setting contrasts with the algebraic collapse described in Theorem B. To illustrate this, consider the translation action of $\Gamma = \mathbb{Z}^n$ on $\mathbb{R}^n$.
\begin{itemize}
		\item \textbf{Action}: For $k \in \mathbb{Z}^n$, the action is given by $(k_1, \dots, k_n) \cdot (x_1, \dots, x_n) = (x_1 + k_1, \dots, x_n + k_n)$.
		\item \textbf{Geometry}: This action is free and proper, yielding the $n$-torus $\mathbb{T}^n = \mathbb{R}^n/\mathbb{Z}^n$ as a compact quotient manifold.
		\item \textbf{Invariants}: Any invariant polynomial $p \in \mathcal{P}(\mathbb{R}^n)^{\mathbb{Z}^n}$ must be periodic ($p(x+k) = p(x)$), which forces $p$ to be constant. Conversely, in the smooth category, the space of invariants $\mathcal{E}(\mathbb{R}^n)^{\mathbb{Z}^n}$ is isomorphic to $\mathcal{E}(\mathbb{T}^n)$.
\end{itemize}

This example indicates that while the algebra of polynomial invariants is restricted to constants, the smooth invariants encompass the entire function theory of the compact quotient manifold.

\subsection{A Schwarz-type theorem for cocompact actions}

While polynomial invariant theory reduces to constants for cocompact actions, the smooth invariant theory admits a complete characterization. The following theorem provides a smooth analogue of Schwarz's theorem for the cocompact setting, demonstrating that the space of smooth invariants is determined by a finite collection of smooth maps.

\begin{theoremC}[Smooth Invariants for Cocompact Actions]
	Let \( \Gamma \) be a discrete group acting freely, properly, and cocompactly on a smooth manifold \( M \). Let \( \pi: M \to M/\Gamma \) be the projection map.
	Then:
	\begin{enumerate}
		\item The pullback
		\[
		\pi^*: \mathcal{E}(M/\Gamma) \xrightarrow{\cong} \mathcal{E}(M)^\Gamma
		\]
		is a topological isomorphism of Fréchet spaces under the standard $ C^\infty $ topology.
		\item There exist smooth invariants \( \sigma_1, \ldots, \sigma_m \in \mathcal{E}(M)^\Gamma \) such that
		\[
		\sigma^* \mathcal{E}(\mathbb{R}^m) = \mathcal{E}(M)^\Gamma,
		\]
		where \( \sigma = (\sigma_1, \ldots, \sigma_m): M \to \mathbb{R}^m \).
		\item For any manifold \( X \) with trivial \( \Gamma \)-action,
		\[
		(\mathrm{id} \times \sigma)^* \mathcal{E}(X \times \mathbb{R}^m) = \mathcal{E}(X \times M)^\Gamma.
		\]
	\end{enumerate}
	
\end{theoremC}

	We establish two preliminary results to support the proof.
	
	\begin{lemma} \label{lem:quotient_manifold}
	Under the hypotheses of Theorem C, the quotient space \( M/\Gamma \) is a compact smooth manifold and the projection \( \pi: M \to M/\Gamma \) is a smooth covering map.
	\end{lemma}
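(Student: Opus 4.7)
The plan is to invoke the standard quotient manifold theorem for free and proper discrete actions, and then observe that cocompactness takes care of compactness essentially for free. More concretely, I would proceed in four short steps, using only standard differential topology arguments.

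First, I would verify that $M/\Gamma$ is Hausdorff. Given two distinct orbits $\Gamma p$ and $\Gamma q$, I would use properness to produce neighborhoods $U \ni p$ and $V \ni q$ such that only finitely many $g \in \Gamma$ satisfy $gU \cap V \neq \emptyset$; after shrinking $U$ and $V$ to exclude this finite set of bad translates (which is possible because the orbits are disjoint and closed under a proper action), the images $\pi(U)$ and $\pi(V)$ separate the two orbits in the quotient. Second, to build a smooth atlas, I would use freeness together with properness: for each $p \in M$ there exists an open neighborhood $U_p$ such that $gU_p \cap U_p = \emptyset$ for every $g \neq e$. This is the standard consequence of a free, properly discontinuous action. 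Hence $\pi\big|_{U_p}$ is a homeomorphism onto an open subset of $M/\Gamma$, and the inverse composed with a chart of $M$ provides a chart on $M/\Gamma$.

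Third, I would check smooth compatibility of transition maps. If $\phi_p = \pi|_{U_p}$ and $\phi_q = \pi|_{U_q}$ are two such charts with overlapping images, the transition map $\phi_q^{-1} \circ \phi_p$ is, on each connected component of its domain, the restriction of some $g \in \Gamma$ acting on $M$, hence smooth because $\Gamma$ acts by diffeomorphisms. This simultaneously verifies that $\pi$ is a local diffeomorphism. To conclude that $\pi$ is a smooth covering map, I would note that for any $\bar{x} \in M/\Gamma$, choosing a slice $U_p$ as above with $\pi(p) = \bar{x}$ gives $\pi^{-1}(\pi(U_p)) = \bigsqcup_{g \in \Gamma} gU_p$, a disjoint union of open sets each mapped diffeomorphically onto $\pi(U_p)$ by $\pi$; this is the defining property of a smooth covering. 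Finally, compactness of $M/\Gamma$ is exactly the cocompactness hypothesis.

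The main ``obstacle'' here is not mathematical but expository: this is textbook material (see, for instance, Lee's \emph{Introduction to Smooth Manifolds}, the quotient manifold theorem for discrete group actions), and the only delicate point is the initial separation argument establishing the Hausdorff property, which is where properness is genuinely used. Freeness is used to obtain the slice condition $gU_p \cap U_p = \emptyset$, and cocompactness is used only at the end. No further structural input is needed.
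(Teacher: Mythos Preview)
Your proposal is correct and follows essentially the same approach as the paper: both invoke the quotient manifold theorem for free and proper discrete actions (the paper cites it directly from Lee's \emph{Introduction to Smooth Manifolds}, while you sketch its proof), and both observe that compactness of $M/\Gamma$ is immediate from cocompactness. The only difference is expository granularity---you unpack the black-box citation---so there is nothing substantively new to compare.
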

	\begin{proof}
	Since the action is free and proper, the quotient $ M/\Gamma $ admits a unique smooth structure such that $ \pi $ is a smooth covering map \cite[Theorem 7.10]{Lee2012}. The compactness of $ M/\Gamma $ follows directly from the cocompactness of the action.
	\end{proof}
	
	\begin{lemma} \label{lem:pi_continuous}
		The pullback \( \pi^*: \mathcal{E}(M/\Gamma) \to \mathcal{E}(M)^\Gamma \) is continuous and injective.
	\end{lemma}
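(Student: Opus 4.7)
The plan is to dispatch injectivity first and then argue continuity via the seminorm topology of $\mathcal{E}(M)$ and $\mathcal{E}(M/\Gamma)$, exploiting that, by Lemma~\ref{lem:quotient_manifold}, $\pi: M \to M/\Gamma$ is a smooth covering map and hence a surjective local diffeomorphism.

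For injectivity, I would observe that a smooth covering map is in particular surjective. Therefore, if $f, g \in \mathcal{E}(M/\Gamma)$ satisfy $\pi^* f = f \circ \pi = g \circ \pi = \pi^* g$, then for every $[x] \in M/\Gamma$ we may pick $x \in \pi^{-1}([x])$ and conclude $f([x]) = g([x])$. Equivalently, $\pi^* f = 0$ forces $f \equiv 0$. No covering-theoretic subtlety is needed beyond surjectivity.

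For continuity, I would work with the standard Fréchet topologies. The space $\mathcal{E}(N)$ on a smooth manifold $N$ carries the topology induced by the seminorms $p_{K,\varphi,k}(f) = \sup_{x \in K} \sum_{|\alpha|\le k}|\partial^\alpha (f \circ \varphi^{-1})(x)|$, where $K$ ranges over compact subsets of chart domains $\varphi$ and $k \ge 0$. Given such a seminorm $p_{K,\varphi,k}$ on $\mathcal{E}(M)$, I cover the compact set $K \subset M$ by finitely many open sets $U_1,\dots,U_r$ on which $\pi|_{U_i}$ is a diffeomorphism onto an open set $V_i \subset M/\Gamma$, and pick compact sets $K_i \subset U_i$ with $K = \bigcup K_i$. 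Writing $f \circ \pi$ in the chart $\varphi$ and comparing with $f$ in charts $\psi_i$ on $V_i$, the chain rule expresses derivatives of $f \circ \pi$ up to order $k$ as a finite linear combination of derivatives of $f$ up to order $k$, with coefficients depending smoothly on $x$ and hence bounded on the compact $K_i$. This yields an estimate of the form
\[
p_{K,\varphi,k}(\pi^* f) \le C \sum_{i=1}^{r} q_{\pi(K_i),\psi_i,k}(f)
\]
for a constant $C$ depending only on the chosen cover, which is precisely continuity of $\pi^*$ with respect to the Fréchet topologies.

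Neither step is really difficult; the only minor technical point is bookkeeping the chart-and-cover argument, where the fact that $\pi$ is a local diffeomorphism makes the transition maps smooth and hence harmless on compact pieces. The subsequent lemma (proving surjectivity onto $\mathcal{E}(M)^\Gamma$ and that the inverse is also continuous, yielding the topological isomorphism asserted in Theorem~C, part~(1)) is where the real content lies, so here I would keep the proof short and focused on the two claims stated.
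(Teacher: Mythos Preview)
Your proof is correct and follows the same approach as the paper: injectivity from surjectivity of $\pi$, and continuity from the definition of the $C^\infty$ topology. The paper's own proof is a one-liner (``Continuity follows immediately from the definition of the $C^\infty$ topology. Injectivity is clear since $\pi$ is surjective''), so you have simply supplied the chart-and-cover details that the paper leaves implicit.
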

	\begin{proof}
		Continuity follows immediately from the definition of the \( C^\infty \) topology. Injectivity is a consequence of the surjectivity of $ \pi $.
	\end{proof}
	
	We now prove each part of the Theorem C.
\begin{proof}[Proof of Theorem C]
	\begin{enumerate}
	   \item By Lemma \ref{lem:quotient_manifold}, $M/\Gamma$ is a compact manifold. 
	   For any $f \in \mathcal{E}(M)^\Gamma$, let $\tilde{f}: M/\Gamma \to \mathbb{R}$ be the map $\tilde{f}([p]) = f(p)$, which is well-defined by $\Gamma$-invariance.
	   Since the action is free and proper, $\pi: M \to M/\Gamma$ is a smooth covering map. 
	   For any $[p] \in M/\Gamma$, there exists a neighborhood $U$ of $[p]$ and a smooth local section $s: U \to M$ such that $\pi \circ s = \mathrm{id}_U$. 
	   On $U$, we have $\tilde{f}|_U = f \circ s$, which is smooth as a composition of smooth maps. 
	   Since such neighborhoods cover $M/\Gamma$, $\tilde{f}$ is smooth and satisfies $\pi^*\tilde{f} = f$.
	   Thus, $\pi^*$ is a continuous linear bijection between Fréchet spaces, where continuity and injectivity are established by Lemma~\ref{lem:pi_continuous}. By the Open Mapping Theorem for Fréchet spaces, the inverse map $(\pi^*)^{-1}$ is also continuous. It follows that $\pi^*: \mathcal{E}(M/\Gamma) \to \mathcal{E}(M)^\Gamma$ is a topological isomorphism.
		
		\item Since $M/\Gamma$ is compact, Whitney's Embedding Theorem (\cite[Theorem 6.14]{Lee2012}) guarantees the existence of a smooth embedding $j: M/\Gamma \hookrightarrow \mathbb{R}^m$ for some $m \in \mathbb{N}$. Let $j = (j_1, \ldots, j_m)$ and define the invariants $\sigma_i = \pi^* j_i \in \mathcal{E}(M)^\Gamma$. Set $\sigma = (\sigma_1, \ldots, \sigma_m): M \to \mathbb{R}^m$.
		Let $f \in \mathcal{E}(M)^\Gamma$. By part (1), there exists a unique $\tilde{f} \in \mathcal{E}(M/\Gamma)$ such that $f = \pi^* \tilde{f}$. Because $j$ is a smooth embedding of a compact manifold, the Tubular Neighborhood Theorem (\cite[Theorems 6.17 and 6.18]{Lee2012}) provides an open neighborhood $W$ of $j(M/\Gamma)$ in $\mathbb{R}^m$ and a smooth retraction $r: W \to j(M/\Gamma)$.
		Define $F_0 = \tilde{f} \circ j^{-1} \circ r$ on $W$. Since $j(M/\Gamma)$ is closed in $\mathbb{R}^m$, there exists a smooth bump function $\rho \in \mathcal{E}(\mathbb{R}^m)$ supported in $W$ such that $\rho \equiv 1$ on an open neighborhood of $j(M/\Gamma)$. Then $F = \rho \cdot F_0$ (extended by zero outside $W$) belongs to $\mathcal{E}(\mathbb{R}^m)$ and satisfies $F|_{j(M/\Gamma)} = \tilde{f} \circ j^{-1}$. We then verify that $f = \sigma^* F$:
		\[\begin{aligned}
			(\sigma^* F)(p) &= F(\sigma(p)) = F(j(\pi(p))) \\
			&= (\tilde{f} \circ j^{-1})(j(\pi(p))) \\
			&= \tilde{f}(\pi(p)) = f(p).
		\end{aligned}
		\]
		Thus $f \in \sigma^* \mathcal{E}(\mathbb{R}^m)$, which completes the proof.

		\item Let $f \in \mathcal{E}(X \times M)^\Gamma$, where $\Gamma$ acts trivially on $X$. The $\Gamma$-invariance of $f$ in the $M$-variable ensures that $\tilde{f}(x, [p]) = f(x, p)$ is a well-defined map on $X \times M/\Gamma$. To see that $\tilde{f}$ is smooth, note that for any $(x_0, [p_0]) \in X \times M/\Gamma$, there exist neighborhoods $U \subset X$ and $V \subset M/\Gamma$ and a smooth local section $s: V \to M$ such that $\tilde{f}|_{U \times V} = f \circ (\mathrm{id}_X \times s)$, which is smooth.Let $j: M/\Gamma \hookrightarrow \mathbb{R}^m$ be the embedding from part (2) and define $k = \mathrm{id}_X \times j: X \times M/\Gamma \hookrightarrow X \times \mathbb{R}^m$. Since $M/\Gamma$ is compact, $X \times M/\Gamma$ is a closed submanifold of $X \times \mathbb{R}^m$. The tubular neighborhood argument used in part (2) extends to this setting (parameterized by $X$), providing a smooth extension $F \in \mathcal{E}(X \times \mathbb{R}^m)$ such that $\tilde{f} = F \circ k$. We verify that $f = (\mathrm{id}_X \times \sigma)^* F$:
		\[\begin{aligned}
			f(x, p) &= \tilde{f}(x, \pi(p)) \\
			&= F(k(x, \pi(p))) \\
			&= F(x, j(\pi(p))) \\
			&= F(x, \sigma(p)) \\
			&= ((\mathrm{id}_X \times \sigma)^* F)(x, p).
		\end{aligned}
		\]
		Thus $f \in (\mathrm{id}_X \times \sigma)^* \mathcal{E}(X \times \mathbb{R}^m)$, which completes the proof.
	\end{enumerate}
\end{proof}

\begin{remark}
	Theorem~C identifies a distinction between the algebraic and smooth regimes: while polynomial invariant theory reduces to constants for cocompact actions (Theorem~B), the smooth invariant theory is preserved and admits a characterization via the geometry of the quotient manifold.
\end{remark}

To illustrate the implications of Theorem~C, we examine the case of the $n$-torus, where the action is given by integer translations on $\mathbb{R}^n$.

\begin{example}[The $n$-torus]
	Let $\Gamma = \mathbb{Z}^n$ act on $M = \mathbb{R}^n$ by translations. This action is free, proper, and cocompact, with the $n$-torus $\mathbb{T}^n$ as the quotient manifold.
	
	\begin{itemize}
		\item \textbf{Part (1)}: The space of invariants $\mathcal{E}(\mathbb{R}^n)^{\mathbb{Z}^n}$ is topologically isomorphic to $\mathcal{E}(\mathbb{T}^n)$.
		\item \textbf{Part (2)}: A smooth embedding $j: \mathbb{T}^n \hookrightarrow \mathbb{R}^{2n}$ provides invariants $\sigma_1, \ldots, \sigma_{2n}$ such that
		\[
		\mathcal{E}(\mathbb{R}^n)^{\mathbb{Z}^n} = \sigma^*\mathcal{E}(\mathbb{R}^{2n}).
		\]
		\item \textbf{Part (3)}: For any manifold $X$, every $\mathbb{Z}^n$-invariant smooth function on $X \times \mathbb{R}^n$ can be expressed as a smooth function of the invariants $\sigma_i$ and the coordinates of $X$.
	\end{itemize}
	
This example shows that Theorem~C realizes the algebra of smooth invariants as the full function theory of the compact quotient, whereas the polynomial invariants consist only of scalars.
\end{example}

The hypotheses of Theorem~C are satisfied by a broad class of geometric actions beyond translations. Notable cases include:
\begin{itemize}
		\item \textbf{Torsion-free crystallographic groups}: These act properly discontinuously and cocompactly on $\mathbb{R}^n$ by isometries, yielding compact flat manifolds as quotients.
		\item \textbf{Torsion-free cocompact Fuchsian groups}: Discrete subgroups of $\mathrm{PSL}(2,\mathbb{R})$ acting freely on the hyperbolic plane $\mathbb{H}^2$, resulting in compact Riemann surfaces of genus $g \geq 2$.
		\item \textbf{Nilmanifolds}: Actions of discrete cocompact subgroups $\Gamma$ on a nilpotent Lie group $N$ (e.g., the Heisenberg group), where the quotient $N/\Gamma$ is a compact nilmanifold.
\end{itemize}

In each instance, the theorem ensures that the smooth invariant theory is entirely determined by the smooth structure of the respective compact quotient.

\section{Synthesis: A comparative analysis of invariant regimes}

The results established in the previous sections, particularly Theorems A, B, and C, reveal that the behavior of invariant functions is deeply contingent upon the geometric and topological properties of the group action. By situating these findings within the broader context of classical invariant theory, we identify four distinct regimes. These regimes are distinguished by the presence or absence of finite generation for polynomial and smooth invariants, as summarized in Table~\ref{tab:comparison}.

\begin{table}[ht]
	\centering
	\small
	\renewcommand{\arraystretch}{1.3}
	\begin{tabular}{@{}llll@{}}
		\toprule
		\textbf{Regime} & \textbf{Polynomial Invariants} & \textbf{Smooth Invariants} & \textbf{Defining Characteristic} \\
		\midrule
		\textbf{Compact Groups} & Finitely generated & Generated by $\mathcal{P}(M)^\Gamma$ & Haar averaging projection \\
		(Schwarz, Weyl)         & (Hilbert--Weyl)    & (Schwarz Theorem)                  & \\
		\addlinespace
		\textbf{Reductive Groups} & Finitely generated & Generated by $\mathcal{P}(M)^\Gamma$ & Reynolds operator \\
		(Luna)                    & (Hilbert)          & (Luna's Theorem)                   & \\
		\addlinespace
		\textbf{Cocompact Actions} & \textbf{Trivial} & Finitely generated by $\mathcal{E}$ & Geometric descent to \\
		(This work)                & (Theorem B)      & (Theorem C)                         & compact quotient $M/\Gamma$ \\
		\addlinespace
		\textbf{Discrete Lorentz}  & Finitely generated & \textbf{Not} generated by $\mathcal{P}$ & Orbit space pathology; \\
		(This work)                & (Theorem A)      & (Schwarz failure)                   & non-properness \\
		\bottomrule
	\end{tabular}
	\caption{Comparison of polynomial and smooth invariant theory across different group action regimes.}
	\label{tab:comparison}
\end{table}

This comparative framework clarifies that the validity of the classical Schwarz property, where the algebra of smooth invariants is generated by its polynomial subring, is not a generic consequence of the finite generation of $\mathcal{P}(M)^\Gamma$. Rather, it depends on structural mechanisms, such as averaging operators or properness of the action, which are not universally preserved in non-compact settings.

\section{Conclusion and perspectives}

This work demonstrates that the correspondence between algebraic and smooth invariants depends strictly on the properness of the group action and the geometry of the quotient. We have shown that the Schwarz property, the generation of smooth invariants by polynomial ones, is inhibited by the topological pathologies of non-proper discrete Lorentz actions, even when the underlying polynomial ring remains finitely generated. Conversely, for cocompact actions, we established that while the polynomial theory reduces to constants, a complete smooth theory is recovered through the manifold structure of the quotient.

These results contribute to the study of invariant theory for non-reductive groups, a field where the standard tools of Geometric Invariant Theory (GIT) often require refinement. Our findings suggest that the stability of the orbit space is a necessary condition for the alignment of algebraic and analytic descriptions of symmetry.

A direct application of this framework lies in the extension to smooth equivariant maps. Since the generation of equivariant sections (Poénaru’s theorem) typically relies on the existence of a smooth invariant basis, the obstructions identified in this paper provide a boundary for the extension of equivariant results to non-compact regimes. This remains a subject for future investigation, aiming to establish the precise conditions under which equivariant symmetry can be reduced to the geometry of the quotient.

This work aligns with the modern program of extending Hilbert's 14th problem to the smooth category, demonstrating that the existence of a categorical quotient in the sense of GIT does not imply the existence of a well-behaved analytic quotient when properness is lost.

\section*{Conflict of interest statement}

The author declares that there is no conflict to report.

\section*{Data availability statement}
No datasets were generated or analysed during the current study. This is a theoretical work in pure mathematics, and all results are proven within the article.

\section*{Acknowledgements}

This study was financed, in part, by the São Paulo Research Foundation (FAPESP), Brasil. Process Number \#2022/12906-3.

\bibliographystyle{plain}
\bibliography{referencias}

\end{document}